%
%
\documentclass[authoryear]{article}
\usepackage{amsmath,amsthm,amsopn,amstext,amscd,amsfonts,amssymb,verbatim}
\usepackage{color}
\usepackage[numbers]{natbib}

\usepackage{graphicx}
\usepackage{verbatim,color}
\usepackage{animate}

\def\tr{\mathop{\rm tr}\nolimits}

\def\etr{\mathop{\rm etr}\nolimits}

\newcommand {\boldgreektext}[1] {\boldmath
             \(#1\)\unboldmath}
\newcommand {\boldgreek}[1]
             {\mbox{\boldgreektext{#1}}
            }

\renewenvironment{abstract}
                 {\vspace{6pt}
                  \begin{center}
                  \begin{minipage}{5in}
                  \centerline{\textbf{Abstract}}
                  \noindent\ignorespaces
                 }
                 {\end{minipage}\end{center}}
                 
\newtheorem{theorem}{\textbf{Theorem}}[section]

\theoremstyle{definition}

\setlength{\textheight}{21.6cm} \setlength{\textwidth}{14cm} \setlength{\oddsidemargin}{1cm}
\setlength{\evensidemargin}{1cm}

\title{\Large \textbf{Matrix generalized elliptical binomial series under real normed division algebras and the central matrix variate beta distribution}}
\author{
  \textbf{Francisco J. Caro-Lopera} \thanks{Corresponding author\newline
   {\bf Key words.}  generalized hypergeometric series, real normed division algebras, elliptically contoured models,  matrix variate Beta distribution, Haar measure, Jack polynomials, symmetrized and nonsymmetrized distributions, distributions of permanents.\newline
    2000 Mathematical Subject Classification. 60E05; 62E15; 15A23; 15B52}\\
  {\normalsize University of Medellin} \\
  {\normalsize Faculty of Basic Sciences} \\
  {\normalsize Carrera 87 No.30-65} \\
  {\normalsize Medell\'{\i}n, Colombia} \\
  {\normalsize E-mail: fjcaro@udemedellin.edu.co} \\
  \textbf{Jos\'e A. D\'{\i}az-Garc\'{\i}a}\\
  {\normalsize Universidad Aut\'onoma de Chihuahua} \\
  {\normalsize Facultad de Zootecnia y Ecolog\'{\i}a} \\
  {\normalsize Perif\'erico Francisco R. Almada Km 1, Zootecnia} \\
  {\normalsize 33820 Chihuahua, Chihuahua, M\'exico}\\
  {\normalsize E-mail: jadiaz@cimat.mx}\\[2ex]
}
\date{}
\begin{document}
\maketitle

\begin{abstract}
In this paper we provide a matrix extension of the scalar binomial series under elliptical contoured models and real normed division algebras. The classical hypergeometric series ${}_{1}F_{0}^{\beta}(a;\mathbf{Z})={}_{1}^{k}P_{0}^{\beta,1}(1:a;\mathbf{Z})=|\mathbf{I}-\mathbf{Z}|^{-a}$ of Jack polynomials are now seen as an invariant generalized determinant with a series representation indexed by any elliptical generator function. In particular, a corollary emerges for a simple derivation of the central matrix variate beta type II distribution under elliptically contoured models in the unified real, complex, quaternions and octonions.  
\end{abstract}

\section{Introduction}\label{sec:1}

A unified matrix variate distribution theory has recently emerged into a simple dimension-parameterized description of the independently developed classical statistics for real ($\beta=1$), complex ($\beta=2$), quaternion ($\beta=4$) and octonion ($\beta=8$); see for example \citet{dgcl:16} and the references therein. The usual treatment of separated Gaussian, Pearson, Kotz type matrix variate models have been also unified into the real elliptically contoured distributions in functions of traces and other generalizations by \citet{gv:93} and \citet{fz:90}, respectively. Melting both unifications provide results that enhances the matrix theory in a more robust general setting. Moreover, the addressed elliptical case also can expect some new approaches. For example, by using a connection between the determinant and the permanent, \citet{clgfbn:13} opens a perspective into the so termed generalized distributions under permanents. However several new results are needed for determinants in order to be translated for permanents. In particular, \citet{clgfbn:13} motivate the generalized series of determinants of the present work for the following real case $\beta=1$. Let $(d\mathbf{H})$ be the invariant normalized Haar measure on the orthogonal group of $m\times m$ matrices. Let also $\etr(\cdot)\equiv\exp(\tr(\cdot))$,  $Re(a)>(m-1)/2$. If $\mathbf{X}_{1}$ and $\mathbf{X}_{2}$ are positive definite $m\times m$ matrices, then we expect a solution for the claimed parallelism of a non central determinantal elliptical distributions proposed by \citet{clgfbn:13} via a possible new polynomial expansion base for $\int_{O(m)}|\mathbf{I}+\mathbf{X}_{1}\mathbf{H}\mathbf{X}_{2}\mathbf{H}'|^{-a}(d\mathbf{H})$. A fact that avoids the current expansion into invariant polynomials of \citet{d:79} and the implicit impossibility of computation due to \citet{cl:16}. A feasible procedure that can emulate the fundamental averaging $\int_{O(m)}\etr(\mathbf{X}_{1}\mathbf{H}\mathbf{X}_{2}\mathbf{H}')(d\mathbf{H})$ of \citet{j:60} for generating the zonal polynomials.  A first step for that task requires a characterization of ${}_{1}F_{0}^{\beta}(a;\mathbf{Z})={}_{1}^{k}P_{0}^{\beta,1}(1:a;\mathbf{Z})=|\mathbf{I}-\mathbf{Z}|^{-a}$ into the general setting of real normed division algebras (RNDA) and classical elliptical models based on traces by \citet{gv:93}. In this context, Section \ref{sec:2} provides the required series representation of $|\mathbf{I}-\mathbf{Z}|^{-a}$ which is invariant under the complete family of elliptically contoured matrix variate distributions and works for every RNDA. Finally, as a consequence, Section \ref{sec:3} gives a new derivation of the central beta distribution under RNDA, which is invariant under elliptical models and non isotropic covariance matrices, as we expect.

\section{Matrix generalized elliptical binomial series under real normed division algebras}\label{sec:2}

For the sequel, complete details can be seen in \citet{dgcl:16} and \citet{dg:14}. Let $\mathfrak{F}$ be a RNDA, where $\mathfrak{F}^{m\times n}$ denotes the set of all $n\times m$ matrices over $\mathfrak{F}$ and its real dimension is $\beta m n$.  Define $\mathfrak{S}_{m}^{\beta}$ the real vector space of all $\mathbf{Z}\in\mathfrak{F}^{m\times m}$, such that $\mathbf{Z}=\mathbf{Z}^{*}$, where $\mathbf{Z}^{*}$ is the conjugate transpose and $||\mathbf{Z}||$ denotes the absolute value of its maximum eigenvalue. $\mathfrak{P}_{m}^{\beta}$ is an open subset of $\mathfrak{S}_{m}^{\beta}$, consisting of $\mathbf{W}=\mathbf{X}^{*}\mathbf{X}$, with $\mathbf{X}\in\mathfrak{L}_{m,n}^{\beta}$, the set of all $m\times n$ of rank $m\leq n$ over $\mathfrak{F}$ with $m$ distinct positive singular values. $(d\mathbf{X})$ is the Lebesgue measure of $\mathfrak{F}^{m\times n}$, denoting the exterior product of the corresponding $\beta m n$ functionally independent variables. $\mathfrak{U}^{\beta}(m)$ represents the groups: real orthogonal $O(m)$, unitary, compact symplectic and exceptional type matrices, for respectively $\beta$; meanwhile, $(\mathbf{H}^{*}d\mathbf{H})$ is the Haar measure on    $\mathfrak{U}^{\beta}(m)$. 

Now, $\mathbf{X}\in\mathfrak{L}_{m,n}^{\beta}$ has a matrix variate elliptically contoured distribution under RNDA (denoted by $\mathbf{X}\sim \mathcal{E}_{n\times m}^{\beta}\left(\boldsymbol{\mu}_{n\times m},\boldsymbol{\Theta}_{n\times n}\otimes\boldsymbol{\Sigma}_{m\times m},h\right)$) if its density respect to the Lebesgue measure is given by:
$$
  f_{\mathbf{X}}(\mathbf{X})=\frac{1}{|\boldsymbol{\Theta}|^{\beta m/2}|\boldsymbol{\Sigma}|^{\beta n/2}}h\left\{\tr[\boldsymbol{\Sigma}^{-1}(\mathbf{X}-\boldsymbol{\mu})^{*}\boldsymbol{\Theta}^{-1}(\mathbf{X}-\boldsymbol{\mu})]\right\},
$$
where $\boldsymbol{\mu}\in \mathfrak{L}_{m,n}^{\beta}$, $\boldsymbol{\Theta}\in\mathfrak{B}_{n}^{\beta}$, $\boldsymbol{\Sigma}\in\mathfrak{B}_{m}^{\beta}$ and the generator function  $h:\mathfrak{F}\rightarrow [0,\infty )$ satisfies $\int_{u\in\mathfrak{P}_{1}^{\beta}}h(u^{2})u^{\beta m n-1}du<\infty$.

Finally, generalized hypergeometric series of Jack polynomials $C_{\kappa}^{\beta}(\mathbf{U})$, indexed by positive ordered integer partitions $\kappa=(k_{1},\ldots,k_{m})$ of $k$, are defined by: 
$$
  {}_{p}^{k}P_{q}^{\beta,1}\left(g(\mathbf{U}):a_{1},\ldots,a_{p};b_{1},\ldots,b_{q};\mathbf{U}\right)= \sum_{k=0}^{\infty} \frac{g(\mathbf{U})}{k!} \sum_{\kappa}\frac{(a_{1})_{\kappa}^{\beta}\cdots (a_{p})_{\kappa}^{\beta}}{(b_{1})_{\kappa}^{\beta}\cdots (b_{q})_{\kappa}^{\beta}} C_{\kappa}^{\beta}(\mathbf{U})
$$
and 
$$
  {}_{p}^{k}P_{q}^{\beta,2}\left(r(\mathbf{U},\mathbf{V}):a_{1},\ldots,a_{p};b_{1},\ldots,b_{q};\mathbf{U};\mathbf{V}\right)=\hspace{4.5cm}
$$
$$\hspace{4cm}
   \sum_{k=0}^{\infty}\frac{g(\mathbf{U})}{k!}\sum_{\kappa}\frac{(a_{1})_{\kappa}^{\beta}\cdots (a_{p})_{\kappa}^{\beta}}{(b_{1})_{\kappa}^{\beta}\cdots (b_{q})_{\kappa}^{\beta}}\frac{C_{\kappa}^{\beta}(\mathbf{U})C_{\kappa}^{\beta}(\mathbf{V})}{C_{\kappa}^{\beta}(\mathbf{I})},
$$
where $g(\cdot), r(\cdot, \cdot)$ are functions of $\mathbf{U}, \mathbf{V}\in\mathfrak{S}_{m}^{\beta}$, and $(a_{i})_{\kappa}^{\beta}$, $i=1, \dots,p$ and $(b_{j})_{\kappa}^{\beta}$, $j = 1, \dots, q$ are the generalized Pochhammer coefficients (see \citet{dg:14} and \citet{cldg:12}).

Now, the real hypergeometric series ${}_{1}^{k}P_{0}^{1,1}(1:a;\mathbf{Z})=|\mathbf{I}-\mathbf{Z}|^{-a}$ of zonal polynomials is an old result that generalises the classical
scalar binomial series. However, its derivation depends strongly of the Gaussian kernel, implicitly present in a matrix Laplace transform, see \citet[Corollary 7.3.5.]{mh:05}. The next theorem establishes that the so termed matrix generalized elliptical binomial series is invariant under the family of elliptically contoured distributions over the RNDA.  
\begin{theorem}\label{Th:EllipticalDeterminant}
    Let $Re(a)>(m-1)\beta/2$ and $\mathbf{Z}\in\mathfrak{S}_{m}^{\beta}$, with $||\mathbf{Z}||<1$. Then,
\begin{equation}\label{eq:EllipticalDeterminant}
    |\mathbf{I}-\mathbf{Z}|^{-a}=\pi^{ma}\,{}_{1}^{k}P_{0}^{\beta,1}\left(\displaystyle\frac{\int_{w\in\mathfrak{P}_{1}^{\beta}}
    h^{(k)}(w)w^{ma+k-1}dw}{\Gamma_{1}^{\beta}(ma+k)}:a;\mathbf{-Z}\right),
\end{equation}
where the generator $h:\mathfrak{F}\rightarrow [0,\infty )$ satisfies $\int_{y\in\mathfrak{P}_{1}^{\beta}}h(y)y^{am+k-1}dy<\infty$.
\end{theorem}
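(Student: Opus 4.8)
The plan is to reproduce the classical derivation of $|\mathbf{I}-\mathbf{Z}|^{-a}={}_{1}F_{0}^{\beta}(a;\mathbf{Z})$, but with the Gaussian matrix Laplace transform of \citet[Corollary 7.3.5]{mh:05} replaced by an integral representation of $|\mathbf{I}-\mathbf{Z}|^{-a}$ valid for an \emph{arbitrary} elliptical generator over the RNDA. Because $||\mathbf{Z}||<1$ we have $\mathbf{I}-\mathbf{Z}\in\mathfrak{P}_{m}^{\beta}$, so I would write $\mathbf{I}-\mathbf{Z}=\mathbf{B}^{*}\mathbf{B}$; for $Re(a)>(m-1)\beta/2$ the change of variable $\mathbf{S}\mapsto\mathbf{B}\mathbf{S}\mathbf{B}^{*}$ on $\mathfrak{P}_{m}^{\beta}$, together with the homogeneity of $|\mathbf{S}|$ and of the Lebesgue measure $(d\mathbf{S})$, yields
\begin{equation*}
  |\mathbf{I}-\mathbf{Z}|^{-a}=\frac{\displaystyle\int_{\mathbf{S}\in\mathfrak{P}_{m}^{\beta}}h\!\left(\tr[\mathbf{S}(\mathbf{I}-\mathbf{Z})]\right)|\mathbf{S}|^{a-(m-1)\beta/2-1}(d\mathbf{S})}{\displaystyle\int_{\mathbf{S}\in\mathfrak{P}_{m}^{\beta}}h(\tr\mathbf{S})\,|\mathbf{S}|^{a-(m-1)\beta/2-1}(d\mathbf{S})},
\end{equation*}
which reproduces the matrix Laplace transform representation when $h=\etr(-\cdot)$.

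The second step expands the numerator. For fixed $\mathbf{S}$, $\tr[\mathbf{S}(\mathbf{I}-\mathbf{Z})]=\tr\mathbf{S}-\tr(\mathbf{S}\mathbf{Z})$, so a Taylor expansion of $h$ about $\tr\mathbf{S}$ gives $h(\tr[\mathbf{S}(\mathbf{I}-\mathbf{Z})])=\sum_{k\geq0}\frac{(-1)^{k}}{k!}(\tr\mathbf{S}\mathbf{Z})^{k}\,h^{(k)}(\tr\mathbf{S})$, convergent since $||\mathbf{Z}||<1$. After justifying termwise integration, I would use $(\tr\mathbf{S}\mathbf{Z})^{k}=\sum_{\kappa\vdash k}C_{\kappa}^{\beta}(\mathbf{S}\mathbf{Z})$ and then the weighted Jack--polynomial integral
\begin{equation*}
  \int_{\mathbf{S}\in\mathfrak{P}_{m}^{\beta}}g(\tr\mathbf{S})\,C_{\kappa}^{\beta}(\mathbf{S}\mathbf{Z})\,|\mathbf{S}|^{a-(m-1)\beta/2-1}(d\mathbf{S})=(a)_{\kappa}^{\beta}\,\frac{\Gamma_{m}^{\beta}(a)}{\Gamma_{1}^{\beta}(ma+k)}\left[\int_{w\in\mathfrak{P}_{1}^{\beta}}g(w)\,w^{ma+k-1}\,dw\right]C_{\kappa}^{\beta}(\mathbf{Z}),
\end{equation*}
with $g=h^{(k)}$. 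This key identity would be proved by inserting the (cost-free, since $g(\tr\mathbf{S})|\mathbf{S}|^{a-(m-1)\beta/2-1}(d\mathbf{S})$ is invariant under $\mathbf{S}\mapsto\mathbf{H}\mathbf{S}\mathbf{H}^{*}$) Haar average $\int_{\mathfrak{U}^{\beta}(m)}C_{\kappa}^{\beta}(\mathbf{S}\,\mathbf{H}^{*}\mathbf{Z}\mathbf{H})(\mathbf{H}^{*}d\mathbf{H})=C_{\kappa}^{\beta}(\mathbf{S})C_{\kappa}^{\beta}(\mathbf{Z})/C_{\kappa}^{\beta}(\mathbf{I})$, then decomposing $\mathbf{S}=w\mathbf{V}$ with $w=\tr\mathbf{S}$, $\mathbf{V}\in\mathfrak{P}_{m}^{\beta}$, $\tr\mathbf{V}=1$ — the Jacobian collapsing the power of $w$ exactly to $w^{ma+k-1}$ — and finally pinning the residual constant of the integral over $\{\tr\mathbf{V}=1\}$ by comparison with the classical Gaussian value $(a)_{\kappa}^{\beta}\Gamma_{m}^{\beta}(a)C_{\kappa}^{\beta}(\mathbf{Z})$ obtained for $g=\etr(-\cdot)$; that comparison is where the generalized Pochhammer symbol $(a)_{\kappa}^{\beta}$ enters.

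Collecting the two steps, using $(-1)^{k}C_{\kappa}^{\beta}(\mathbf{Z})=C_{\kappa}^{\beta}(-\mathbf{Z})$ and the case $k=0$ (empty $\kappa$) of the last display for the denominator, the right-hand side of the representation becomes
\begin{equation*}
  \frac{\Gamma_{1}^{\beta}(ma)}{\displaystyle\int_{w\in\mathfrak{P}_{1}^{\beta}}h(w)\,w^{ma-1}\,dw}\;{}_{1}^{k}P_{0}^{\beta,1}\!\left(\frac{\displaystyle\int_{w\in\mathfrak{P}_{1}^{\beta}}h^{(k)}(w)\,w^{ma+k-1}\,dw}{\Gamma_{1}^{\beta}(ma+k)}:a;-\mathbf{Z}\right).
\end{equation*}
Setting $\mathbf{Z}=\mathbf{0}$, where only the $k=0$ term of the series survives, forces the prefactor to equal $\pi^{ma}$; equivalently $\int_{w\in\mathfrak{P}_{1}^{\beta}}h(w)w^{ma-1}dw=\pi^{-ma}\Gamma_{1}^{\beta}(ma)$, which is precisely the normalization an elliptical density generator over $\mathfrak{F}$ carries. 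This gives \eqref{eq:EllipticalDeterminant}.

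I expect the main obstacle to be the weighted Jack--polynomial integral: establishing it for a general radial weight rather than the exponential, uniformly in $\beta\in\{1,2,4,8\}$, and making rigorous the interchange of summation and integration in the Taylor step, for which a dominated--convergence bound valid on $||\mathbf{Z}||<1$ and $Re(a)>(m-1)\beta/2$ is needed. Once that lemma is secured, the remaining manipulations are formal bookkeeping.
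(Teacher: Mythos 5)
Your proposal follows essentially the same route as the paper: the same elliptical integral representation of $|\mathbf{I}-\mathbf{Z}|^{-a}$ over $\mathfrak{P}_{m}^{\beta}$, the same Taylor/Jack-polynomial expansion about $\tr\mathbf{S}$, termwise integration via the same weighted Jack-polynomial integral, and the same normalization of the elliptical generator to identify the prefactor $\pi^{ma}$. The only difference is that the ``key lemma'' you flag as the main obstacle is exactly Lemma~2 of \citet{dgcl:16}, which the paper simply cites (for both the representation and the termwise integration) rather than re-deriving.
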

\begin{proof}
Take $\kappa=0$ in \citet[Lemma 2]{dgcl:16}, then $|\mathbf{I}-\mathbf{Z}|^{-a}$ can be written as
$$ 
  \frac{\Gamma_{1}^{\beta}(ma)}{\Gamma_{m}^{\beta}(a)\int_{y\in\mathfrak{P}_{1}^{\beta}}h(y)y^{ ma-1}dy}\int_{\mathbf{X}\in\mathfrak{P}_{m}^{\beta}}h\left(\tr \mathbf{X}-\tr\mathbf{X}\mathbf{Z}) \right)|\mathbf{X}|^{a-(m-1)\beta/2-1}(d\mathbf{X}).
$$
After expanding in a convergent series of Jack polynomials around $\tr\mathbf{X}$ we have
$$ 
  \frac{\Gamma_{1}^{\beta}(ma)\left[\Gamma_{m}^{\beta}(a)\right]^{-1}}{\int_{y\in\mathfrak{P}_{1}^{\beta}} h(y)y^{ma-1}dy}\int_{\mathbf{X} \in \mathfrak{P}_{m}^{\beta}} |\mathbf{X}|^{a-(m-1)\beta/2-1}\,{}_{0}^{k}P_{0}^{\beta,1}\left(h^{(k)}\left(\tr \mathbf{X} \right)
  :-\mathbf{Z}\mathbf{X}\right)(d\mathbf{X}).
$$
Integration by using \citet[Lemma 2]{dgcl:16} leads to:
$$ 
  \frac{\Gamma_{1}^{\beta}(ma)\left[\Gamma_{m}^{\beta}(a)\right]^{-1}}{\int_{y\in\mathfrak{P}_{1}^{\beta}}h(y)y^{ma-1}dy}\,{}_{1}^{k} P_{0}^{\beta,1}\left(\displaystyle\frac{\int_{w\in\mathfrak{P}_{1}^{\beta}}
   h^{(k)}(w)w^{ma+k-1}dw} {\Gamma_{1}^{\beta}(ma+k)}:a;-\mathbf{Z}\right).
$$
Finally, extending the real case of \citet[p. 59]{fz:90} into RNDA gives the required result. 
\end{proof}         
 
\section{Matrix variate beta type I and II distributions based on elliptically contoured models for real normed division algebras}\label{sec:3}

In this section we provide an application of Theorem \ref{Th:EllipticalDeterminant} in the setting of the central matrix variate beta type II distribution.

Real ($\beta=1$) central matrix variate beta type I and II distributions based on a matrix variate Gaussian distribution back to \citet{h:39} and they are studied in \citet{mh:05}, among many others. The invariance of the central matrix variate beta type I and II distributions under elliptical contoured models in terms of traces arrived later in \citet[p. 182]{gv:93} and the references therein.

We now study the central matrix variate beta distribution under the unified approach of RNDA and elliptically contoured models. As we shall see the solution is just a simple consequence of Theorem \ref{Th:EllipticalDeterminant}, and our derivation revisits some historical problems in the Gaussian case.

Let $\mathbf{X}=(\mathbf{X}_{1}^{*},\mathbf{X}_{2}^{*})^{*}\sim \mathcal{E}_{(n_{1}+n_{2})\times m}^{\beta}\left(\mathbf{0}_{(n_{1}+n_{2})\times m},\mathbf{I}_{n_{1}+n_{2}}\otimes\boldsymbol{\Sigma}_{m\times m},h\right)$. Then the joint distribution of $\mathbf{X}_{1},\mathbf{X}_{2}$ is given by:
$$
  dF_{\mathbf{X}}(\mathbf{X})=|\boldsymbol{\Sigma}|^{-\beta(n_{1}+n_{2})/2}h\left[\tr\boldsymbol{\Sigma}^{-1}(\mathbf{X}_{1}^{*} \mathbf{X}_{1}+\mathbf{X}_{2}^{*}\mathbf{X}_{2})\right](d\mathbf{X}_{1})\wedge(d\mathbf{X}_{2}).
$$
Define $\mathbf{W}_{i}=\mathbf{X}_{i}^{*}\mathbf{X}_{i}, i=1,2$. By \citet{dgcl:24b}: 
$$
 (d\mathbf{X}_{i})=2^{-m}|\mathbf{W}_{i}|^{\beta(n_{i}-m+1)/2-1}(d\mathbf{W}_{i})(\mathbf{H}_{i}^{*}d\mathbf{H}_{i}), \quad i=1,2. 
$$
Then integration over $\mathfrak{U}^{\beta}(m)$, by using \citet{dgcl:16},  provides the joint distribution of generalized elliptical Wishart matrices $\mathbf{W}_{1}$, $\mathbf{W}_{2}$:
$$
  dF_{\mathbf{W}_{1},\mathbf{W}_{2}}(\mathbf{W}_{1},\mathbf{W}_{2})=\frac{\pi^{\beta(n_{1}+n_{2})m/2}|\boldsymbol{\Sigma}|^{-\beta(n_{1}+n_{2})/2}} {\Gamma_{m}^{\beta}\left(\frac{\beta n_{1}}{2}\right)\Gamma_{m}^{\beta }\left(\frac{\beta n_{2}}{2}\right)}|\mathbf{W}_{1}|^{\beta (n_{1}-m+1)/2-1}\hspace{4cm}$$
$$
  \hspace{2cm}\times |\mathbf{W}_{2}|^{\beta (n_{2}-m+1)/2-1}h\left[tr\boldsymbol{\Sigma}^{-1}(\mathbf{W}_{1}+\mathbf{W}_{2})\right](d\mathbf{W}_{1})\wedge(d\mathbf{W}_{2}),
$$
a fact denoted by  $(\mathbf{W}_{1},\mathbf{W}_{2})^{*} \sim \mathcal{EW}_{m}^{\beta}(n_{1}, n_{2},\boldsymbol{\Sigma}, \mathbf{\Sigma};h)$.

Now, consider the following transformations: $\mathbf{F}=\mathbf{W}_{2}^{-1/2}\mathbf{W}_{1}\mathbf{W}_{2}^{-1/2}$ and
$$
  \mathbf{W}=\begin{pmatrix}
                \mathbf{W}_{1}\\
                \mathbf{W}_{2}
             \end{pmatrix}
             =\mathbf{W}_{2}^{1/2}
             \begin{pmatrix}
                \mathbf{W}_{2}^{-1/2}\mathbf{W}_{1}\mathbf{W}_{2}^{-1/2}\\
                \mathbf{I}_{m}\\
             \end{pmatrix}\mathbf{W}_{2}^{1/2} 
             =\begin{pmatrix}
                 \mathbf{W}_{2}^{1/2}\mathbf{F}\mathbf{W}_{2}^{1/2}\\
                 \mathbf{W}_{2}^{1/2}\mathbf{W}_{2}^{1/2}
              \end{pmatrix},
$$ then $(d\mathbf{W})=|\mathbf{W}_{2}|^{\beta(m-1)/2+1}(d\mathbf{F})\wedge(d\mathbf{W}_{2})$.

The joint density function of $\mathbf{F}$ and $\mathbf{W}_{2}$ is reduced to:
$$
   dF_{\mathbf{W}_{2},\mathbf{F}}(\mathbf{W}_{2},\mathbf{F})=\frac{\pi^{\beta(n_{1}+n_{2})m/2}|\boldsymbol{\Sigma}|^{-\beta(n_{1}+n_{2})/2}} {\Gamma_{m}^{\beta }\left(\frac{\beta n_{1}}{2}\right)\Gamma_{m}^{\beta }\left(\frac{\beta n_{2}}{2}\right)} |\mathbf{W}_{2}|^{\beta(n_{1}+n_{2}-m+1)/2-1}\hspace{4cm}
$$
\begin{equation}\label{integral}
    \hspace{1cm}\times |\mathbf{F}|^{\beta(n_{1}-m+1)/2-1}h\left[\tr\boldsymbol{\Sigma}^{-1}\mathbf{W}_{2}^{1/2}(\mathbf{I}+\mathbf{F}) \mathbf{W}_{2}^{1/2}\right](d\mathbf{W}_{2})\wedge(d\mathbf{F}).
\end{equation}
Thus, the distribution of $\mathbf{F}$ follows by integration of $\mathbf{W}_{2}$ over $\mathfrak{P}_{m}^{\beta}$.

Certain real Gaussian kernel ($h(y)=\exp(-y/2)$) of the required integral appeared in the context of noncentral beta type I and it was declared by \citet{c:63} in the following terms: 

\medskip
\begin{small}
\textbf{\textit{``It would be of interest to be able to integrate out $\mathbf{G}$ ($\mathbf{W}_{2}$) from this expression and so obtain the distribution of the matrix $\mathbf{R}$ ($\mathbf{F}$), but this appears difficult unless $\boldsymbol{\Omega}$ ($\boldsymbol{\Sigma}$) is either a scalar matrix or of rank 1".}} 
\end{small}

\medskip
The problem preserved that label and was claimed in a number of papers and books like \citet{gn:00} and \citet{f:85}. The integral remained unsolved by 45 years until the old termed symmetrized distribution of \citet{g:73} was elucidated by   \citet{dggj:07} in the inverse way, for obtaining the corresponding nonsymmetrized  distribution.     

Now, a first inspection of the required integration in (\ref{integral}) is certainly more difficult than the addressed exponential core in \citet{c:63}, because, we expect that the generator must vanish into an invariant central beta independent of $h(\cdot)$ under the real normed division algebras.

The methodology of \citet{dggj:07} starts by obtaining the symmetrized distribution  $f_{s}(\mathbf{F})=\int_{\mathfrak{U}^{\beta}(m)}f(\mathbf{H}\mathbf{F}\mathbf{H}^{*})(d\mathbf{H})$. In this case, we require $$
  \int_{\mathfrak{U}^{\beta}(m)}h\left[\tr\boldsymbol{\Sigma}^{-1}\mathbf{W}_{2}^{1/2}\mathbf{H}(\mathbf{I}+\mathbf{F}) \mathbf{H}^{*}\mathbf{W}_{2}^{1/2}\right](d\mathbf{H}).
$$ 
Expanding in convergent series ${}_{0}^{k}P_{0}^{\beta,1}(\cdot)$ of Jack polynomials, indexed by the $k-$th derivative of the kernel elliptical model, we have 
$$
  \int_{\mathfrak{U}^{\beta}(m)}{}_{0}^{k}P_{0}^{\beta,1}\left(h^{(k)}(\tr\boldsymbol{\Sigma}^{-1}\mathbf{W}_{2}): \mathbf{W}_{2}^{1/2}\boldsymbol{\Sigma}^{-1}\mathbf{W}_{2}^{1/2}\mathbf{H}\mathbf{F}\mathbf{H}^{*}\right)(d\mathbf{H})
$$
and the integration via \citet{dgcl:16} leads to 
$$
  {}_{0}^{k}P_{0}^{\beta,2}\left(h^{(k)}(\tr\boldsymbol{\Sigma}^{-1}\mathbf{W}_{2}):\boldsymbol{\Sigma}^{-1}\mathbf{W}_{2};\mathbf{F}\right).
$$ 
Then the symmetrized function of (\ref{integral}) is simplified as
$$
  f_{s}(\mathbf{F})=\frac{\pi^{\beta(n_{1}+n_{2})m/2}|\boldsymbol{\Sigma}|^{-\beta(n_{1}+n_{2})/2}}{\Gamma_{m}^{\beta }\left(\frac{\beta n_{1}}{2}\right)\Gamma_{m}^{\beta }\left(\frac{\beta n_{2}}{2}\right)}|\mathbf{F}|^{\beta(n_{1}-m+1)/2-1}
\hspace{5cm}
$$
$$\hspace{1cm}
 \times\int_{\mathbf{W}_{2}\in\mathfrak{B}_{m}^{\beta}} |\mathbf{W}_{2}|^{\beta(n_{1}+n_{2}-m+1)/2-1}\quad{}_{0}^{k}P_{0}^{\beta,2} \left(h^{(k)}(\tr\boldsymbol{\Sigma}^{-1} \mathbf{W}_{2}):\boldsymbol{\Sigma}^{-1}\mathbf{W}_{2};\mathbf{F}\right)(d\mathbf{W}_{2}).    
$$
Thus, integration by using \citet{dgcl:16}  gets the form 
$$
f_{s}(\mathbf{F})=\frac{\pi^{\beta (n_{1}+n_{2})m/2}\Gamma_{m}^{\beta}\left(\frac{\beta(n_{1}+n_{2})}{2}\right)}{\Gamma_{m}^{\beta }\left(\frac{\beta n_{1}}{2}\right)\Gamma_{m}^{\beta }\left(\frac{\beta n_{2}}{2}\right)}|\mathbf{F}|^{\beta(n_{1}-m+1)/2-1}
\hspace{4cm}$$
\begin{equation*}\hspace{2cm}
\times{}_{1}^{k}P_{0}^{\beta,1}\left(\frac{\int_{w\in\mathfrak{P}_{1}^{\beta}} h^{(k)}(w) w^{\beta m(n_{1}+n_{2})/2+k-1}dw}{\Gamma_{1}^{\beta}\left(\frac{\beta m(n_{1}+n_{2})}{2}+k\right)}:\frac{\beta(n_{1}+n_{2})}{2};\mathbf{F}\right).    
\end{equation*}
Finally, by Theorem \ref{Th:EllipticalDeterminant}, the symmetrized function takes the simple distribution:
$$
 f_{s}(\mathbf{F})=\frac{\Gamma_{m}^{\beta}\left(\frac{\beta(n_{1}+n_{2})}{2}\right)}{\Gamma_{m}^{\beta}\left(\frac{\beta n_{1}}{2}\right)\Gamma_{m}^{\beta}\left(\frac{\beta n_{2}}{2}\right)}|\mathbf{F}|^{\beta(n_{1}-m+1)/2-1}|\mathbf{I}+\mathbf{F}|^{-\beta(n_{1}+n_{2})/2}.
$$
We end the procedure of \citet{dggj:07} in order to obtain the nonsymmetrized distribution function of $\mathbf{F}$. In this case we ask for a function $f(\mathbf{F})$ such that $\int_{\mathfrak{U}^{\beta}(m)}f(\mathbf{H}\mathbf{F}\mathbf{H}^{*})(d\mathbf{H})=f_{s}(\mathbf{F})$. And the solution easily establishes that $f(\mathbf{F})=f_{s}(\mathbf{F})$. 

Now, taking  $\mathbf{F}=\mathbf{U}(\mathbf{I}-\mathbf{U})^{-1}$, we have arrived at the matrix variate beta type I distribution under RNDA, which is also invariant under the elliptical contoured distributions.
\begin{theorem}\label{Th:beta}
Let the generalized elliptical Wishart matrices $\mathbf{W}_{1}$ and  $\mathbf{W}_{2}$ such that 
$$
 (\mathbf{W}_{1}, \mathbf{W}_{2})^{*} \sim \mathcal{EW}_{m}^{\beta}(n_{1},n_{2},\boldsymbol{\Sigma},\mathbf{\Sigma};h)
$$ 
and define $\mathbf{F}=\mathbf{W}_{2}^{-1/2}\mathbf{W}_{1}\mathbf{W}_{2}^{-1/2}$.  The distribution of the matrix variate beta type I, $\mathbf{U}=(\mathbf{I}+\mathbf{F}^{-1})^{-1}$, under any underlying contoured elliptical distribution in RNDA, is given by
\begin{equation}
  f_{\mathbf{U}}(\mathbf{U}) =\frac{\Gamma_{m}^{\beta}\left(\frac{\beta(n_{1}+n_{2})}{2}\right)}{\Gamma_{m}^{\beta}\left(\frac{\beta n_{1}}{2}\right)
  \Gamma_{m}^{\beta}\left(\frac{\beta n_{2}}{2}\right)}|\mathbf{U}|^{\beta(n_{1}-m+1)/2-1}|\mathbf{I}-\mathbf{U}|^{\beta(n_{2}-m+1)/2-1},
\end{equation}
where $n_{1}>\beta(m-1)/2,n_{2}>\beta(m-1)/2$ and $\mathbf{0}<\mathbf{U}<\mathbf{I}$.
\end{theorem}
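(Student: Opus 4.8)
The plan is to run the symmetrization argument of \citet{dggj:07} that is outlined just above the statement, but to feed it Theorem \ref{Th:EllipticalDeterminant} at the decisive step so that the generator $h$ and the covariance $\boldsymbol{\Sigma}$ both disappear, and then to finish with an elementary change of variables from the type~II matrix $\mathbf{F}$ to the type~I matrix $\mathbf{U}$. Thus the proof splits into three blocks: (i) produce the symmetrized density $f_{s}(\mathbf{F})$ in closed form; (ii) argue that the nonsymmetrized density equals $f_{s}$; (iii) transform $\mathbf{F}\mapsto\mathbf{U}$ and collect exponents.

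For (i), I would begin from the joint density $dF_{\mathbf{W}_{1},\mathbf{W}_{2}}$ of the generalized elliptical Wishart pair with common scale $\boldsymbol{\Sigma}$, perform the factorization $\mathbf{W}_{1}=\mathbf{W}_{2}^{1/2}\mathbf{F}\mathbf{W}_{2}^{1/2}$ with Jacobian $(d\mathbf{W})=|\mathbf{W}_{2}|^{\beta(m-1)/2+1}(d\mathbf{F})\wedge(d\mathbf{W}_{2})$ to reach the joint density (\ref{integral}) of $(\mathbf{F},\mathbf{W}_{2})$, and then compute $f_{s}(\mathbf{F})=\int_{\mathfrak{U}^{\beta}(m)}f(\mathbf{H}\mathbf{F}\mathbf{H}^{*})(d\mathbf{H})$. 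The kernel $h[\tr\boldsymbol{\Sigma}^{-1}\mathbf{W}_{2}^{1/2}\mathbf{H}(\mathbf{I}+\mathbf{F})\mathbf{H}^{*}\mathbf{W}_{2}^{1/2}]$ is expanded in the convergent series ${}_{0}^{k}P_{0}^{\beta,1}$ around $\tr\boldsymbol{\Sigma}^{-1}\mathbf{W}_{2}$; averaging over $\mathfrak{U}^{\beta}(m)$ via \citet{dgcl:16} yields a ${}_{0}^{k}P_{0}^{\beta,2}$ in $(\boldsymbol{\Sigma}^{-1}\mathbf{W}_{2};\mathbf{F})$; integrating $\mathbf{W}_{2}$ over $\mathfrak{P}_{m}^{\beta}$ with the same lemma produces a ${}_{1}^{k}P_{0}^{\beta,1}$ with parameter $\beta(n_{1}+n_{2})/2$ and precisely the generator-dependent coefficient $\bigl[\int_{w}h^{(k)}(w)w^{\beta m(n_{1}+n_{2})/2+k-1}dw\bigr]/\Gamma_{1}^{\beta}(\beta m(n_{1}+n_{2})/2+k)$; Theorem \ref{Th:EllipticalDeterminant} then collapses this series to $|\mathbf{I}+\mathbf{F}|^{-\beta(n_{1}+n_{2})/2}$, giving
$$
 f_{s}(\mathbf{F})=\frac{\Gamma_{m}^{\beta}\left(\frac{\beta(n_{1}+n_{2})}{2}\right)}{\Gamma_{m}^{\beta}\left(\frac{\beta n_{1}}{2}\right)\Gamma_{m}^{\beta}\left(\frac{\beta n_{2}}{2}\right)}|\mathbf{F}|^{\beta(n_{1}-m+1)/2-1}|\mathbf{I}+\mathbf{F}|^{-\beta(n_{1}+n_{2})/2},
$$
which is already free of $h$ and $\boldsymbol{\Sigma}$.

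For (ii), I would note that this $f_{s}$ depends on $\mathbf{F}$ only through $|\mathbf{F}|$ and $|\mathbf{I}+\mathbf{F}|$, hence only through the eigenvalues of $\mathbf{F}$, so it is itself invariant under $\mathbf{F}\mapsto\mathbf{H}\mathbf{F}\mathbf{H}^{*}$; therefore the constant map $f(\mathbf{F})=f_{s}(\mathbf{F})$ solves $\int_{\mathfrak{U}^{\beta}(m)}f(\mathbf{H}\mathbf{F}\mathbf{H}^{*})(d\mathbf{H})=f_{s}(\mathbf{F})$, and by uniqueness of the symmetrized distribution it is the density of $\mathbf{F}$ — the matrix variate beta type~II law under RNDA and elliptical models. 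For (iii), I would use $\mathbf{U}=(\mathbf{I}+\mathbf{F}^{-1})^{-1}=(\mathbf{I}+\mathbf{F})^{-1}\mathbf{F}$, equivalently $\mathbf{F}=\mathbf{U}(\mathbf{I}-\mathbf{U})^{-1}$ (valid since $\mathbf{F}$ and $\mathbf{I}+\mathbf{F}$ commute), with the RNDA Jacobian $(d\mathbf{F})=|\mathbf{I}-\mathbf{U}|^{-[\beta(m-1)+2]}(d\mathbf{U})$ — the $\beta$-analogue of the real case in \citet{mh:05} — together with $|\mathbf{F}|=|\mathbf{U}|\,|\mathbf{I}-\mathbf{U}|^{-1}$ and $|\mathbf{I}+\mathbf{F}|=|\mathbf{I}-\mathbf{U}|^{-1}$. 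Substituting and adding the powers of $|\mathbf{I}-\mathbf{U}|$ gives $-\beta(n_{1}-m+1)/2+1+\beta(n_{1}+n_{2})/2-\beta(m-1)-2=\beta(n_{2}-m+1)/2-1$, which is exactly the exponent claimed for $f_{\mathbf{U}}$; the support $\mathbf{0}<\mathbf{U}<\mathbf{I}$ comes from $\mathbf{F}\in\mathfrak{P}_{m}^{\beta}$, and the ranges $n_{1},n_{2}>\beta(m-1)/2$ are the existence conditions inherited from the Wishart densities and from the hypothesis $Re(a)>(m-1)\beta/2$ of Theorem \ref{Th:EllipticalDeterminant}.

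The main obstacle I anticipate is not the final Jacobian or the uniqueness step — both are routine transports of real-case facts — but the bookkeeping in block (i): justifying the term-by-term interchange of the Jack-polynomial series with both integrations (over $\mathfrak{U}^{\beta}(m)$ and over $\mathfrak{P}_{m}^{\beta}$) under the convergence hypothesis on $h$, and verifying that the coefficient generated after integrating $\mathbf{W}_{2}$ matches \emph{exactly} the one in \eqref{eq:EllipticalDeterminant} with $a=\beta(n_{1}+n_{2})/2$ and $m$ replaced appropriately, so that Theorem \ref{Th:EllipticalDeterminant} applies verbatim and the series telescopes to the determinant power. Once that identification is in place, the rest is algebra.
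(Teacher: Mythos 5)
Your proposal reproduces the paper's own derivation step for step: the same passage to the joint density of $(\mathbf{F},\mathbf{W}_{2})$, the same symmetrization via the Jack-polynomial expansion and integration over $\mathfrak{U}^{\beta}(m)$ and $\mathfrak{P}_{m}^{\beta}$, the same collapse of the resulting ${}_{1}^{k}P_{0}^{\beta,1}$ series through Theorem \ref{Th:EllipticalDeterminant}, and the same nonsymmetrization and change of variables $\mathbf{F}=\mathbf{U}(\mathbf{I}-\mathbf{U})^{-1}$. In fact you supply slightly more detail than the paper at the two places it is terse (the orthogonal invariance argument for $f=f_{s}$ and the explicit exponent bookkeeping in the final Jacobian), and that arithmetic checks out.
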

Note that Theorem \ref{Th:EllipticalDeterminant} is an independent concept of beta distribution. It is just related with a new integral representation of the hypergeometric series ${}_{1}F_{0}^{\beta}(a;\mathbf{Z})={}_{1}^{k}P_{0}^{\beta,1}(1:a;\mathbf{Z})$, under RNDA and elliptically contoured kernel functions.  About the beta distribution derivation, the use of symmetrized and nonsymmetrized functions is an available technique which can be tested in other matrix factorization for filtering geometric information in quotient spaces (see \citet{dgcl:16}).  
Finally, the provided technique can be used for obtaining the distributions of rectangular or square nonsymmetric matrices. For example, instead of the matrix $\mathbf{F}=\mathbf{W}_{2}^{-1/2}\mathbf{W}_{1}\mathbf{W}_{2}^{-1/2}$ we can define the nonsymmetric matrix $\mathbf{G}=\mathbf{W}_{1}\mathbf{W}_{2}^{-1}$, which the latent roots are the same than $\mathbf{F}$. Thus, once the  Jacobian $(d\mathbf{G})=J(\mathbf{W}_{1} \rightarrow \mathbf{G})(d\mathbf{W}_{2})$ is reached, then the distribution of  $\mathbf{G}$ follows by a similar procedure of Theorem \ref{Th:beta} applied to 
$$
  \mathbf{W}=\begin{pmatrix}
                \mathbf{W}_{1}\\
                \mathbf{W}_{2}
              \end{pmatrix}
              =\begin{pmatrix}
                 \mathbf{W}_{1}\mathbf{W}_{2}^{-1}\\
                 \mathbf{I}_{m}\\
               \end{pmatrix}
               \mathbf{W}_{2}
               =\begin{pmatrix}
                 \mathbf{G}\mathbf{W}_{2}\\
                 \mathbf{W}_{2}\\
               \end{pmatrix}.
$$

\section{Conclusions}
This work has presented the binomial series in the setting of two generalized unifications: the matrix variate elliptically contoured distributions and the RNDA. It is a first step for a future averanging $\int_{\mathfrak{U}^{\beta}(m)}|\mathbf{I}+\mathbf{X}_{1}\mathbf{H}\mathbf{X}_{2}\mathbf{H}'|^{-a}(d\mathbf{H})$ under RNDA in order to avoid non computable Davis polynomials and explore the noncentral matrix variate symmetric distributions in terms of permanents and determinants. The generalized series here derived has led to a simple construction of the central matrix variate beta type II distribution by a trivial equality of symmetrized and nonsymmetrized functions. A future application of the matrix elliptical series under RNDA can be seen in distribution theory of permanents via \citet{clgfbn:13}. 



\end{document}